\newtheorem{theorem}{Theorem}
\newtheorem{lemma}[theorem]{Lemma}
\begin{document}

\title{de Bruijn arrays for L-fillings }
\author{Lara Pudwell and Rachel Rockey}
\thanks{This project was partially supported by the National Science Foundation (NSF DUE-1068346).}
\address{
	Department of Mathematics \& Computer Science\\
	Valparaiso University\\
	Valparaiso, Indiana 46383, USA
}

\begin{abstract}
We use modular arithmetic to construct a de Bruijn array containing all fillings of an L (a $2 \times 2$ array with the upper right corner removed) with digits chosen from $\{0,\dots,k-1\}$.
\end{abstract}

\maketitle
\markboth{Lara Pudwell and Rachel Rockey}{de Bruijn arrays for Ls}

\section{Introduction}\label{Introduction}

How many sequences of length 3 can be made with the digits 0 and 1?  Exhaustive listing reveals 8 such sequences: 000, 001, 010, 100, 011, 101, 110, and 111.  More generally, introductory combinatorics tells us that there are $k^n$ sequences of length $n$ made up from the digits $\{0, \dots, k-1\}$.  A more challenging question is: is there a sequence of length $k^n$ that contains each of these $k^n$ sequences as a consecutive subsequence?  (Here we consider the beginning and end of the sequence to be glued together.)  Such a sequence is known as a $(k,n)$-\emph{de Bruijn sequence}.  For example, 00010111 is a (2,3)-de Bruijn sequence.  (000, 001, 010, 101, 011, and 111 are easy to spot; 110 and 100 make use of the glued ends of the sequence.)

In fact, such sequences are well-studied and have been used in applications ranging from robotics to developing card tricks.  Diaconis and Graham give a delightful overview of such applications in \cite{DG11}.  Nicolaas Govert de Bruijn \cite{dB46}, for whom such sequences are named, and I.J. Good \cite{G46} independently developed a special type of graph in the 1940s that is instrumental in the proof of Theorem \ref{deBseq}.

\begin{theorem}\label{deBseq}
$(k,n)$-de Bruijn sequences exist for every $k, n \geq 2$, and there are $\frac{k!^{k^{n-1}}}{k^n}$ of them.
\end{theorem}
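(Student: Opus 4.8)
The plan is to recast the question as one about Eulerian circuits in the directed graph of de Bruijn and Good, and then to call on two classical tools: Euler's theorem for existence and the BEST theorem together with the Matrix--Tree theorem for the count. First I would build the graph $G$ whose vertices are the $k^{n-1}$ words of length $n-1$ over $\{0,\dots,k-1\}$ and whose directed edges are the $k^{n}$ words of length $n$, where the word $a_1\cdots a_n$ contributes the edge from $a_1\cdots a_{n-1}$ to $a_2\cdots a_n$. Reading off the newly appended symbol as one traverses edges shows that a cyclic $(k,n)$-de Bruijn sequence is exactly a closed walk using every edge once, that is, an Eulerian circuit of $G$, and conversely; two sequences that are rotations of one another give the same cyclic circuit, which is why the final tally will count circuits rather than strings.

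For existence, I would note that every vertex of $G$ has in-degree and out-degree equal to $k$ (any symbol may be appended, and any symbol may have preceded) and that $G$ is strongly connected, since one can shift any word into any other. Euler's theorem for digraphs then guarantees an Eulerian circuit, so a $(k,n)$-de Bruijn sequence exists for all $k,n\ge 2$.

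For the enumeration I would apply the BEST theorem, which says that the number of cyclic Eulerian circuits of a connected balanced digraph is
\[
  ec(G) \;=\; t_w(G)\prod_{v}\bigl(\deg^+(v)-1\bigr)!,
\]
where $t_w(G)$ is the number of spanning arborescences oriented toward a fixed vertex $w$, a count that is independent of $w$. Because every out-degree equals $k$ and there are $k^{n-1}$ vertices, the product collapses to $\bigl((k-1)!\bigr)^{k^{n-1}}$, and it remains only to evaluate $t_w(G)$.

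The hard part will be computing $t_w(G)$. Here I would invoke the directed Matrix--Tree theorem, which presents $t_w(G)$ as a principal cofactor of the Laplacian $L=kI-A$, where $A$ is the adjacency matrix of $G$; for a balanced strongly connected digraph this cofactor equals $\tfrac{1}{k^{n-1}}$ times the product of the nonzero eigenvalues of $L$. The key spectral fact is that $A$ has characteristic polynomial $x^{k^{n-1}-1}(x-k)$, so its only nonzero eigenvalue is $k$; I would prove this by recognizing $G$ as the line digraph of the de Bruijn graph on words of length $n-2$ and using that passing to a line digraph preserves the nonzero spectrum while adding only zeros, with base case the graph on single symbols whose adjacency matrix is the all-ones matrix with eigenvalues $k,0,\dots,0$. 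The eigenvalues of $L$ are then $0$ once and $k$ with multiplicity $k^{n-1}-1$, so $t_w(G)=k^{k^{n-1}-1}/k^{n-1}=k^{k^{n-1}-n}$. Multiplying,
\[
  ec(G)=k^{k^{n-1}-n}\bigl((k-1)!\bigr)^{k^{n-1}}=\frac{\bigl(k\,(k-1)!\bigr)^{k^{n-1}}}{k^{n}}=\frac{(k!)^{k^{n-1}}}{k^{n}},
\]
as claimed. I expect the spectral computation behind the arborescence count to be the only genuine obstacle; the remaining steps are direct applications of the two named theorems.
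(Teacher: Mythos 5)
The paper does not actually prove this theorem; it is quoted as a classical background result, with only the remark that the graph of de Bruijn and Good is ``instrumental in the proof.'' Your proposal supplies exactly that standard argument --- Eulerian circuits of the de Bruijn graph for existence, and the BEST theorem combined with the directed Matrix--Tree theorem (plus the line-digraph spectral computation giving $t_w(G)=k^{k^{n-1}-n}$) for the count --- and all of your arithmetic, including the identification of cyclic sequences with Eulerian circuits up to rotation, checks out. This is correct and is the approach the paper gestures toward.
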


Much more recently, mathematicians have analyzed a 2-dimensional generalization of these sequences known as de Bruijn tori.  Given $k \geq 2$ and  $m, n \in \mathbb{Z}^+$ a $(k,m,n)$-\emph{de Bruijn torus} is an array that contains each of the $k^{mn}$ fillings of an $m \times n$ array with entries from $\{0,\dots, k-1\}$ exactly once.  (Here, the left side of the array is glued to the right side, and the top of the array is glued to the bottom.)  Much is still unknown about general de Bruijn tori.  However, when $m=1$, a de Bruijn torus is merely a de Bruijn sequence.  Further, Jackson, Stevens, and Hurlbert \cite{JSH09} showed that $(k,m,n)$-de Bruijn tori always exist when $m=n$.  An example of a $(2,2,2)$-de Bruijn torus is shown in Figure \ref{deBtorus}.  Here, the all 1s filling is easiest to spot.  The all 0s filling is trickiest: it uses the 0s in the four corners of the array, which (thanks to the gluing of top to bottom and the gluing of left to right) are all adjacent.

\begin{figure}
\begin{center}
\begin{tabular}{|c|c|c|c|}
\hline
0&0&1&0\\
\hline
1&1&1&0\\
\hline
0&1&1&1\\
\hline
0&1&0&0\\
\hline
\end{tabular}
\end{center}
\caption{A (2,2,2)-de Bruijn torus}
\label{deBtorus}
\end{figure}

In this note, we consider an intermediate problem.  Rather than compactly arranging all possible sequences or all possible fillings of a rectangular array, we consider fillings of an L-shape, that is, a $2 \times 2$ array with the upper right corner removed.  Since the L-shape has 3 entries, there are $k^3$ fillings of the L with digits from $\{0,\dots, k-1\}$.  For example, the $2^3=8$ fillings of an L with a binary alphabet are shown in Figure \ref{binaryLs}.  In general, we wish to find a $k \times k^2$ array (with the left side glued to the right side an the top glued to the bottom) that contains each filling of the L exactly once.  To distinguish from the tori in the previous paragraph, an array that contains all fillings of an L with digits from $\{0,\dots , k-1\}$ is a $k$-\emph{de Bruijn L-array}.  An example of a 2-de Bruijn L-array is shown in Figure \ref{binarydeB}.

\begin{figure}
\begin{center}
\begin{tabular}{cccc}

\begin{tabular}{|c|c|}
\cline{1-1}
\multicolumn{1}{|c|}{0}\\
\hline
0&0\\
\hline
\end{tabular}&

\begin{tabular}{|c|c|}
\cline{1-1}
\multicolumn{1}{|c|}{0}\\
\hline
0&1\\
\hline
\end{tabular}&

\begin{tabular}{|c|c|}
\cline{1-1}
\multicolumn{1}{|c|}{0}\\
\hline
1&0\\
\hline
\end{tabular}&

\begin{tabular}{|c|c|}
\cline{1-1}
\multicolumn{1}{|c|}{0}\\
\hline
1&1\\
\hline
\end{tabular}\\

\vspace{0.1in}&&&\\

\begin{tabular}{|c|c|}
\cline{1-1}
\multicolumn{1}{|c|}{1}\\
\hline
0&0\\
\hline
\end{tabular}&

\begin{tabular}{|c|c|}
\cline{1-1}
\multicolumn{1}{|c|}{1}\\
\hline
0&1\\
\hline
\end{tabular}&

\begin{tabular}{|c|c|}
\cline{1-1}
\multicolumn{1}{|c|}{1}\\
\hline
1&0\\
\hline
\end{tabular}&

\begin{tabular}{|c|c|}
\cline{1-1}
\multicolumn{1}{|c|}{1}\\
\hline
1&1\\
\hline
\end{tabular}
\end{tabular}
\end{center}
\caption{All possible binary L fillings}
\label{binaryLs}
\end{figure}

\begin{figure}
\begin{center}
\begin{tabular}{|c|c|c|c|}
\hline
0&0&1&0\\
\hline
0&1&1&1\\
\hline
\end{tabular}
\end{center}
\caption{A 2-de Bruijn L-array}
\label{binarydeB}
\end{figure}

It turns out that such arrays always exist.  The rest of this paper is aimed at a proof of Theorem \ref{Larray}.

\begin{theorem}\label{Larray}
For $k \geq 2$, there exists a $k$-de Bruijn L-array.
\end{theorem}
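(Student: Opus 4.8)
The plan is to recast the problem as the construction of a single array-valued function and then to certify a bijection by explicit inversion. Index the rows of the desired $k \times k^2$ array by $i \in \{0,\dots,k-1\}$ and the columns by $j \in \{0,\dots,k^2-1\}$, reading both indices cyclically (the row above row $0$ is row $k-1$, and the column after column $k^2-1$ is column $0$). Writing the entry in row $i$, column $j$ as $A(i,j)$, the L whose corner sits at that cell records the triple $\big(A(i-1,j),\,A(i,j),\,A(i,j+1)\big)$, namely its upper-left, lower-left, and lower-right cells. Since there are exactly $k \cdot k^2 = k^3$ cells and exactly $k^3$ fillings of the L, the array is a $k$-de Bruijn L-array precisely when the map $(i,j) \mapsto \big(A(i-1,j),A(i,j),A(i,j+1)\big)$ is a bijection onto $\{0,\dots,k-1\}^3$; as the two sets are finite and equinumerous, it suffices to prove this map is injective.

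For the construction I would use a single modular formula. Write each column index in base $k$ as $j = kq + r$ with $q,r \in \{0,\dots,k-1\}$, and set
\[
A(i,j) \equiv q + i r \pmod{k}.
\]
I would then read off the three cells of the L in terms of $q$, $r$, and $i$. Setting $x = A(i-1,j)$, $y = A(i,j)$, and $z = A(i,j+1)$, one sees immediately that $y \equiv q + ir$ and $x \equiv q + (i-1)r$, so $y - x \equiv r \pmod{k}$; since $r \in \{0,\dots,k-1\}$, the difference $y - x$ recovers $r$ exactly. The value of $z$ behaves differently according to whether incrementing $j$ changes only its low base-$k$ digit or triggers a carry, and that distinction is exactly what I would exploit to recover the rest.

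To prove injectivity I would reconstruct $(i,j)$ from $(x,y,z)$ by cases on the recovered $r$. If $r \le k-2$, incrementing $j$ fixes $q$ and sends $r \mapsto r+1$, so $z \equiv y + i$, whence $i \equiv z - y \pmod{k}$, and then $q \equiv y - ir$ determines $q$ and hence $j = kq + r$. If $r = k-1$ there is a carry: either $q < k-1$, where $j+1 = k(q+1)$ forces $z \equiv q+1$ with $z \neq 0$, or the single corner case $j = k^2-1$, where $j+1 = 0$ forces $z = 0$. In each subcase $q$ is pinned down (by $q \equiv z-1$, respectively $q = k-1$), and then $y \equiv q - i$ recovers $i$. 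Since every target triple yields a unique preimage $(i,j)$, the map is injective, hence bijective.

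The main obstacle is the carry at the column wraparound, i.e.\ the columns with $r = k-1$. For the generic columns the clean relation $z - y \equiv i$ makes the inversion transparent; at $r = k-1$ that relation fails, and one must instead extract $q$ from $z$ and only then recover $i$ from $y$. The genuinely delicate point is separating the two carry subcases—an ordinary carry versus the lone corner $j = k^2-1$—and checking that $z = 0$ occurs in exactly one of them, so that no two cells ever share a filling; I would verify these boundary cases by hand, after which everything reduces to the base-$k$ bookkeeping above. One sanity check worth making at the outset is that the formula carries no hidden parity obstruction: a naive ``constant vertical shift'' construction fails for even $k$, since a cyclic list of shifts cannot be a zero-sum permutation, but the cross term $ir$ sidesteps this, as one can confirm directly in the small cases.
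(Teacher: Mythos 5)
Your construction is exactly the paper's array ($A(i,j) \equiv q + ir \pmod{k}$ is their $(s+rc) \bmod k$ with square $q=s$ and column-within-square $r=c$), and your proof follows the same route: recover the column from $y-x$, then split on whether incrementing the column carries into the next square, reading the row off $z-y$ in the generic case and the square off $z$ in the carry case. The argument is correct; the only difference from the paper is packaging, since you phrase injectivity as explicit inversion of the position from the triple $(x,y,z)$ while the paper argues that two distinct positions sharing $a$ and $b$ must produce distinct corner entries.
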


\section{Coordinates}

To easily refer to the entries of our de Bruijn L-arrays, we will index each entry in the array with 3 coordinates: row, column, and square.  

We index rows starting with 0.  Since a $k$-de Bruijn L-array has $k$ rows, a row number $r$ must have $0 \leq r \leq k-1$.

We also index columns starting with 0.  A $k$-de Bruijn L-array has $k^2$ columns, and we partition the columns into $k$ squares.  Columns $0, 1, \dots, k-1$ make up square 0; columns $k, \dots, 2k-1$ make up square 1; in general, columns $sk, \dots, (s+1)k-1$ make up square $s$.  Like the row number, an entry's square number $s$ must have $0 \leq s \leq k-1$.

Finally, we label an entry's column not by its column number within the entire array but by its column number within its square, starting with 0.  Equivalently, if we wish to talk about an entry in the $j$th column of the array, write $j=sk+c$ where $0 \leq s, c \leq k-1$ to get the square number $s$ and the column number $c$.

As an example, consider the $3 \times 9$ array in Figure \ref{arraypos}.  $a$ is in row 0, column 0, square 0.  $b$ is in row 0, column 0, square 1.  $c$ is in row 1, column 0, square 2.  $d$ is in row 2, column 2, square 2.  This coordinate system uniquely identifies each of the $k^3$ entries in a $k \times k^2$ array with a 3-tuple in $\{0, \dots, k-1\}^3$.

\begin{figure}
\begin{center}
\begin{tabu}{|[2pt]c|c|c|[2pt]c|c|c|[2pt]c|c|c|[2pt]}
\tabucline[2pt]{-}
a&\phantom{x}&\phantom{x}&b&\phantom{x}&\phantom{x}&\phantom{x}&\phantom{x}&\phantom{x}\\
\hline
&&&&&&c&&\\
\hline
&&&&&&&&d\\
\tabucline[2pt]{-}
\end{tabu}
\end{center}
\caption{A $3 \times 9$ array}
\label{arraypos}
\end{figure}

It turns out that the array with $((s+rc) \bmod k)$ in row $r$, column $c$, square $s$ is a $k$-de Bruijn L-array.  For example, the 2-de Bruijn L-array, the 3-de Bruijn L-array, and the 4-de Bruijn L-array produced by this formula are shown in Figure \ref{twoegs}.

\begin{figure}
\begin{center}
\begin{tabu}{|[2pt]c|c|[2pt]c|c|[2pt]}
\tabucline[2pt]{-}
0&0&1&1\\
\hline
0&1&1&0\\
\tabucline[2pt]{-}
\end{tabu}

\vspace{0.2in}

\begin{tabu}{|[2pt]c|c|c|[2pt]c|c|c|[2pt]c|c|c|[2pt]}
\tabucline[2pt]{-}
0&0&0&1&1&1&2&2&2\\
\hline
0&1&2&1&2&0&2&0&1\\
\hline
0&2&1&1&0&2&2&1&0\\
\tabucline[2pt]{-}
\end{tabu}

\vspace{0.2in}

\begin{tabu}{|[2pt]c|c|c|c|[2pt]c|c|c|c|[2pt]c|c|c|c|[2pt]c|c|c|c|[2pt]}
\tabucline[2pt]{-}
0&0&0&0&1&1&1&1&2&2&2&2&3&3&3&3\\
\hline
0&1&2&3&1&2&3&0&2&3&0&1&3&0&1&2\\
\hline
0&2&0&2&1&3&1&3&2&0&2&0&3&1&3&1\\
\hline
0&3&2&1&1&0&3&2&2&1&0&3&3&2&1&0\\
\tabucline[2pt]{-}
\end{tabu}
\end{center}
\caption{A 2-de Bruijn L-array, a 3-de Bruijn L-array, and a 4-de Bruijn L-array}
\label{twoegs}
\end{figure}

\section{Some Number Theory}

For this section, given $k \geq 2$, we consider the array with $((s+rc) \bmod k)$ in row $r$, column $c$, square $s$.   To prove Theorem \ref{Larray}, we need to show that any two L-fillings in different positions in the array are different fillings.  First, we have a lemma.

\begin{lemma}\label{diffcol}
Given two fillings \begin{tabular}{|c|c|}
\cline{1-1}
\multicolumn{1}{|c|}{$a$}\\
\hline
$b$&$d_1$\\
\hline
\end{tabular}
and 
\begin{tabular}{|c|c|}
\cline{1-1}
\multicolumn{1}{|c|}{$a$}\\
\hline
$b$&$d_2$\\
\hline
\end{tabular}, 
 $d_1$ and $d_2$ have the same column number.
\end{lemma}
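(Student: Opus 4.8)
The plan is to exploit the structure of the formula $(s+rc)\bmod k$ to show that the two values $a$ and $b$ by themselves determine the column number shared by the $a$- and $b$-cells, and hence the column number of the $d$-cell.

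First I would fix coordinates. Suppose the first filling has its $a$-cell in row $r_1$, square $s_1$, column $c_1$, and the second filling has its $a$-cell in row $r_2$, square $s_2$, column $c_2$. In each filling the $b$-cell lies directly below the $a$-cell---same column, one row further down---so it occupies the same square and column number but sits in row $r_i+1 \pmod{k}$. Applying the defining formula to both cells and subtracting gives
\[
b - a \equiv \bigl(s_i + (r_i+1)c_i\bigr) - \bigl(s_i + r_i c_i\bigr) = c_i \pmod{k}.
\]

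The key observation is then that $c_i \equiv b - a \pmod{k}$, and since $0 \leq c_i \leq k-1$, the column number $c_i$ is \emph{uniquely} pinned down as $(b-a)\bmod k$. Because both fillings share the same top value $a$ and the same bottom-left value $b$ by hypothesis, this forces $c_1 = c_2$. Finally, the $d$-cell of each filling lies in the same row as its $b$-cell but one column to the right, so its column number is $(c_i+1)\bmod k$: it increases by $1$, wrapping from $k-1$ back to $0$ exactly when a square boundary (or the right edge of the array) is crossed. Since $c_1 = c_2$, the two $d$-cells carry the same column number, as claimed.

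I expect the only delicate point to be the bookkeeping of the two kinds of wrap-around---the top-to-bottom gluing when $r_i = k-1$ (so the $b$-cell lands in row $0$) and the left-to-right gluing of columns when $c_i = k-1$ (so the $d$-cell lands in the next square). Both are handled transparently by reducing modulo $k$ throughout, so no genuine obstacle arises; the crux is simply recognizing that the difference $b-a$ recovers the column number of the left edge of the L.
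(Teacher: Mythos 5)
Your proof is correct and takes essentially the same approach as the paper: both recover the column number of the left leg of the L as $(b-a)\bmod k$ and then observe that the $d$-cell's column number is forced to be the successor of that value (wrapping to $0$ at a square boundary). Your version merely spells out the subtraction $\bigl(s_i+(r_i+1)c_i\bigr)-\bigl(s_i+r_ic_i\bigr)\equiv c_i \pmod{k}$ that the paper compresses into ``by construction.''
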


\begin{proof}
By construction, if $b$ appears immediately below $a$ in the array, then their column number $c$ is given by $c=(b-a) \bmod k$.  This means that if $(b-a) \bmod k = k-1$, then $d_1$ and $d_2$ both have column number $0$.  Otherwise, $d_1$ and $d_2$ both have column number $((b-a) \bmod k) + 1$.
\end{proof}

We continue by analyzing the row numbers of these fillings.

\begin{lemma}\label{diffrow}
Given two fillings \begin{tabular}{|c|c|}
\cline{1-1}
\multicolumn{1}{|c|}{$a$}\\
\hline
$b$&$d_1$\\
\hline
\end{tabular}
and 
\begin{tabular}{|c|c|}
\cline{1-1}
\multicolumn{1}{|c|}{$a$}\\
\hline
$b$&$d_2$\\
\hline
\end{tabular} in different locations of the array, $d_1$ and $d_2$ have different row numbers.
\end{lemma}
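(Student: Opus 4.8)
The plan is to parametrize each L-filling by the coordinates $(r,s,c)$ of its top-left cell $a$, so that $a \equiv s + rc \pmod k$, the bottom-left cell satisfies $b \equiv s + (r+1)c \pmod k$, and the bottom-right cell $d$ occupies row $(r+1) \bmod k$ (the same row as $b$). Writing the two given fillings with top-left coordinates $(r_1,s_1,c_1)$ and $(r_2,s_2,c_2)$, I would convert the hypothesis that they share the same $a$ and $b$ into congruences, and recast the goal of ``different row numbers for $d_1,d_2$'' as $r_1 \neq r_2$, since adding $1$ modulo $k$ is a bijection on $\{0,\dots,k-1\}$.

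First I would invoke Lemma \ref{diffcol} (equivalently, the relation $c_i \equiv b-a \pmod k$ from its proof together with $0 \le c_i \le k-1$) to conclude $c_1 = c_2$; call this common value $c$. Next I would record that equality of the two $a$-values gives $s_1 + r_1 c \equiv s_2 + r_2 c \pmod k$, while equality of the two $b$-values adds nothing new once $c_1=c_2$ is known (it reduces to the same congruence). Thus, beyond $c_1=c_2$, all the hypothesis provides is the single linear congruence $s_1 - s_2 \equiv (r_2 - r_1)c \pmod k$.

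I would then argue by contradiction: suppose $d_1$ and $d_2$ had the same row number, i.e.\ $r_1 = r_2$. Substituting into the congruence forces $s_1 \equiv s_2 \pmod k$, hence $s_1 = s_2$ because both lie in $\{0,\dots,k-1\}$. Combined with $c_1 = c_2$, this makes the two top-left cells occupy an identical coordinate triple $(r,s,c)$, so the fillings sit in the same location, contradicting the hypothesis. Hence $r_1 \neq r_2$, and the rows of $d_1$ and $d_2$ differ.

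Because the argument is so short, the real work is bookkeeping rather than a deep obstacle: one must check that the $b$-equation is genuinely redundant (so the problem collapses to one congruence), handle the degenerate case $c=0$ uniformly (there the congruence reads $s_1 = s_2$ for every choice of rows, so ``different locations'' already forces $r_1 \ne r_2$), and confirm that ``different locations'' is equivalent to $(r_1,s_1)\ne(r_2,s_2)$ once $c_1=c_2$ has been established. The wraparound gluing of top to bottom is harmless, as it only shifts the row index by $1$ modulo $k$.
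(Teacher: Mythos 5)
Your proposal is correct and follows essentially the same route as the paper: fix the common column $c$ via Lemma \ref{diffcol}, reduce the hypothesis to the single congruence $s_1 + r_1 c \equiv s_2 + r_2 c \pmod{k}$, and derive a contradiction with ``different locations'' from $r_1 = r_2$. Your added bookkeeping (the redundancy of the $b$-equation, the $c=0$ case, and the bijectivity of adding $1$ modulo $k$) only makes explicit what the paper leaves implicit.
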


\begin{proof}
To compute the digit at row $r$, column $c$, square $s$, we use the formula $(s+rc) \bmod k$. If the $a$ in the first filling is in square $s$, row $r$, and column $c$ while the $a$ in the second filling is in square $\hat{s}$, row $\hat{r}$, and column $c$, we know $a=(s+rc) \bmod k = (\hat{s} + \hat{r}c) \bmod k$. 

If $r= \hat{r}$ we see that $(s+rc) \bmod k = (\hat{s} + \hat{r}c) \bmod k$ and, after subtracting $rc$ from both sides, $s=\hat{s} \bmod k$. This means both fillings have the same square, row, and column numbers, which contradicts these two fillings being in different locations.  It must be the case that $r \not= \hat{r}$.
\end{proof}

Now we are ready to prove Theorem \ref{Larray} by showing that the previously-described array is indeed a 2-de Bruijn L-array. 

\begin{proof}
Suppose we have two fillings, \begin{tabular}{|c|c|}
\cline{1-1}
\multicolumn{1}{|c|}{$a$}\\
\hline
$b$&$d_1$\\
\hline
\end{tabular}
and 
\begin{tabular}{|c|c|}
\cline{1-1}
\multicolumn{1}{|c|}{$a$}\\
\hline
$b$&$d_2$\\
\hline
\end{tabular}, which are in different locations of the array.
We wish to show $d_1 \not= d_2$. By Lemma \ref{diffcol}, we know that the two occurrences of $a$ have the same column number $c$. 

If $c=k-1$, we see by construction that the $a$s in the two fillings must have different square numbers $s$ and $\hat{s}$.  Furthermore, $d_1 = (s+1) \bmod k$ and $d_2 = (\hat{s}+1) \bmod k$.  Therefore $d_1 \neq d_2$ because $s \neq \hat{s}$.

Now suppose $c \neq k-1$, the first filling has its $a$ entry in square $s$ and row $r$, and the second filling has its $a$ entry in square $\hat{s}$ and row $\hat{r}$.  We see:\\
$a=(s+rc) \bmod k = (\hat{s}+\hat{r}c) \bmod k$,\\
$b=(s+(r+1)c) \bmod k = (\hat{s}+(\hat{r}+1)c) \bmod k$,\\
$d_1=(s+(r+1)(c+1)) \bmod k = (s+rc+c+r+1) \bmod k = (a + c + r + 1) \bmod k$,\\
$d_2=(\hat{s}+(\hat{r}+1)(c+1)) \bmod k = (\hat{s} + \hat{r} c+c+ \hat{r}+1) \bmod k = (a+ c + \hat{r} + 1) \bmod k$.

By Lemma \ref{diffcol}, we know that $r \neq \hat{r}$, so $d_1 \neq d_2$.
\end{proof}

\section{For Further Exploration}

Although we have constructed a $k$-de Bruijn L-array for every $k \geq 2$, many interesting questions remain for de Bruijn L-arrays.

For example, when $k=2$, a case analysis shows that (up to rotation) there are precisely two 2-de Bruijn L-arrays; one is shown in Figure \ref{binarydeB} and the other is shown in Figure \ref{twoegs}.  A computer search for other L-arrays when $k=3$ yields dozens more solutions.  Some have evident symmetry while others do not; two examples are given in Figure \ref{nonsymmegs}.  A computer search for other L-arrays when $k\geq 4$ is prohibitive, so less naive approaches are needed.  In particular, for $k >2$ it is unknown how many $k$-de Bruijn L-arrays exist.

\begin{figure}[bth]
\begin{center}
\begin{tabular}{|c|c|c|c|c|c|c|c|c|}
\hline
0&0&0&1&1&1&2&2&2\\
\hline
1&0&0&2&1&1&0&2&2\\
\hline
1&2&1&2&0&2&0&1&0\\
\hline
\end{tabular}

\vspace{0.2in}

\begin{tabular}{|c|c|c|c|c|c|c|c|c|}
\hline
0&1&1&1&0&1&2&2&1\\
\hline
0&0&1&1&2&1&0&0&2\\
\hline
0&2&0&0&2&2&1&2&2\\
\hline
\end{tabular}
\end{center}
\caption{Two 3-de Bruijn L-arrays}
\label{nonsymmegs}
\end{figure}

Other two-dimensional shapes merit further investigation.  One could explore de Bruijn arrays for fillings of staircase shapes or for fillings of the intersection of a longer column with a longer row.  The construction of this paper is particular to the 3-square L discussed above, though, and it is not easily generalized.


\begin{thebibliography}{9}

\bibitem{dB46}
Nicolaas Govert de Bruijn,
A Combinatorial Problem,
\textit{Koninklijke Nederlandse Akademie v. Wetenschappen} \textbf{49} (1946) 758--764.

\bibitem{DG11}
Persi Diaconis and Ron Graham,
\textit{Magical Mathematics: The Mathematics that Animate Great Magic Tricks},
Princeton University Press (2011).

\bibitem{G46}
I.J. Good,
Normal recurring decimals,
\textit{Journal of the London Mathematical Society} \textbf{21} (1946) 167--169.

\bibitem{JSH09}
Brad Jackson, Brett Stevens, and Glenn Hurlbert,
Research problems on Gray codes and universal cycles,
\textit{Discrete Mathematics} \textbf{309} (2009)  5341--5348.


\end{thebibliography}
\end{document}